\documentclass{amsart}

\newtheorem{theorem}{Theorem}%[section]
\newtheorem{lemma}[theorem]{Lemma}

\newtheorem{corollary}[theorem]{Corollary}

\newtheorem{example}[theorem]{Example}

\theoremstyle{definition}

\newtheorem{remark}[theorem]{Remark}

%\theoremstyle{remark}

%========================================================

\usepackage{amscd,amssymb}
%=====================================================

\begin{document}

\title[Inner Automorphisms]
{Inner Automorphisms of Lie Algebras\\
Related with Generic $2\times 2$ Matrices}

\author[Vesselin Drensky and \c{S}ehmus F\i nd\i k]
{Vesselin Drensky and \c{S}ehmus F\i nd\i k}
\address{Institute of Mathematics and Informatics,
Bulgarian Academy of Sciences,
1113 Sofia, Bulgaria}
\email{drensky@math.bas.bg}
\address{Department of Mathematics,
\c{C}ukurova University, 01330 Balcal\i,
 Adana, Turkey}
\email{sfindik@cu.edu.tr}

\thanks
{The research of the first named author was partially supported
by Grant for Bilateral Scientific Cooperation between Bulgaria and Ukraine}
\thanks
{The research of the second named author was partially supported by the
 Council of Higher Education (Y\"OK) in Turkey}

\subjclass[2010]
{17B01, 17B30, 17B40, 16R30.}
\keywords{free Lie algebras, generic matrices, inner automorphisms, Baker-Campbell-Hausdorff formula.}

\begin{abstract}
Let $F_m=F_m(\text{\rm var}(sl_2(K)))$ be the relatively free algebra of rank $m$
in the variety of Lie algebras generated by the algebra $sl_2(K)$ over a field $K$
of characteristic 0. Translating an old result of Baker from 1901 we present a
multiplication rule for the inner automorphisms of the completion $\widehat{F_m}$
of $F_m$ with respect to the formal power series topology.
Our results are more precise for $m=2$ when $F_2$ is isomorphic to
the Lie algebra $L$ generated by two generic
traceless $2\times 2$ matrices.
We give a complete description of the group of inner
automorphisms of $\widehat L$.
As a consequence we obtain similar results for the
automorphisms of the relatively free algebra
$F_m/F_m^{c+1}=F_m(\text{\rm var}(sl_2(K))\cap {\mathfrak N}_c)$
in the subvariety of $\text{\rm var}(sl_2(K))$ consisting of all
nilpotent algebras of class at most $c$
in $\text{\rm var}(sl_2(K))$.
\end{abstract}

\maketitle

\section*{Introduction}
Let $L_m$ be the free Lie algebra of rank $m\geq 2$ over a
field $K$ of characteristic 0 and let $G$ be an arbitrary Lie algebra.
Let $I(G)=I_m(G)$ be the ideal of $L_m$ consisting of all Lie polynomial
identities in $m$ variables for the algebra $G$. The factor algebra
$F_m(G)=F_m(\text{\rm var }G)=L_m/I(G)$ is the relatively free Lie algebra of rank $m$
in the variety of Lie algebras generated by $G$.
Typical examples of relatively free algebras are free solvable of class $k$ Lie algebras
when $I(G)=L_m^{(k)}$
(e.g., free metabelian Lie algebras with $I(G)=L_m''$),
free nilpotent of class $c$ Lie algebras when $I(G)=L^{c+1}$,
relatively free algebras in a variety generated by a finite dimensional
simple Lie algebra $G$, etc. See the books by Bahturin \cite{Ba} and
Mikhalev, Shpilrain and Yu \cite{MSY} for a background on relatively free Lie algebras
and their automorphisms, respectively.

Let $F_m(G)$ be a relatively free Lie algebra freely generated by $x_1,\ldots,x_m$.
An automorphism $\varphi$ of $F_m(G)$ is called linear if it is of the form
\[
\varphi(x_j)=\sum_{i=1}^m\alpha_{ij}x_i,\quad \alpha_{ij}\in K,\quad j=1,\ldots,m.
\]
It is triangular if
\[
\varphi(x_j)=\alpha_jx_j+f_j(x_{j+1},\ldots,x_m),\quad 0\not=\alpha_j\in K,f_j\in F_m(G),\quad j=1,\ldots,m.
\]
(Here the polynomials $f_j=f_j(x_{j+1},\ldots,x_m)$ do not depend on the variables $x_1,\ldots,x_j$.)
The automorphism $\varphi$ is tame if it can be presented as a product of linear
and triangular automorphisms. Otherwise $\varphi$ is wild.

Cohn \cite{C} showed that every automorphism of the
free Lie algebra $L_m$ is tame. In particular, the group of automorphisms $\text{\rm Aut}(L_2)$ is isomorphic to
the general linear group $GL_2(K)$.
Quite often relatively free algebras $F_m(G)$
possess wild automorphisms and for better understanding of the group
$\text{\rm Aut}(F_m(G))$ one studies its important subgroups.

When we consider a finite
dimensional simple Lie algebra $G$ over $\mathbb C$,
the general theory gives that the series
\[
\exp(\text{\rm ad}g)=\sum_{n\geq 0}\frac{\text{\rm ad}^ng}{n!}
\]
which defines an inner automorphism converges for all $g\in G$.
Considering inner automorphisms of a relatively free Lie algebra $F_m(G)$,
the first problem arising is that the formal power series defining
the inner automorphism has to be well defined. This means that
the operator $\text{\rm ad}w$, $w\in F_m(G)$, has to be locally nilpotent.
In many important cases
$\text{\rm ad}w$ is not locally nilpotent for some $w\in F_m(G)$.
Hence we have two possibilities to study inner automorphisms:

\noindent (1) to restrict the consideration to locally nilpotent
derivations $\text{\rm ad}w$,

\noindent or

\noindent
(2) to consider nilpotent relatively free algebras
\[
F_m(G)/F_m^{c+1}(G)=L_m/(I(G)+L_m^{c+1})
\]
when $\exp(\text{\rm ad}w)$
is well defined for all $w\in F_m(G)/F_m^{c+1}(G)$.

The well known formula $e^xe^y=e^{x+y}$ from Calculus is not more true
if we consider non-commuting variables $x$ and $y$.
If $x$ and $y$ are the generators of the free associative algebra $A=K\langle x,y\rangle$,
we may consider the completion $\widehat{A}$ with
respect to the formal power series topology.
Then the classical Baker-Campbell-Hausdorff formula gives the solution $z$ of the equation
$e^z=e^xe^y$ in $\widehat{A}$.
It is a formal power series in $\widehat{A}$
and its homogeneous components are Lie elements, i.e., $z$ belongs to the completion $\widehat{L_2}$
of the free Lie algebra $L_2$ generated by $x$ and $y$.
Similarly, if we consider the relatively free algebra $F_m=F_m(G)$, it has a completion
$\widehat{F_m}$ with respect to the formal power series topology.
The composition of two inner automorphisms of $\widehat{F_m}$
is an inner automorphism obtained by the Baker-Campbell-Hausdorff formula.

In the former case (1), when we consider only locally nilpotent
derivations of the algebra $F_m=F_m(G)$, let $\text{\rm ad}u$ and
$\text{\rm ad}v$ be locally nilpotent for some $u,v\in F_m(G)$. It
is not clear a priori whether the solution $w\in \widehat{F_m}$ of
the equation $\exp(\text{\rm ad}u)\exp(\text{\rm ad}v)=\exp(\text{\rm ad}w)$ belongs to $F_m$. If this
happens for all $u,v\in F_m$ with $\text{\rm ad}u,\text{\rm ad}v$
locally nilpotent, then the product of two inner automorphisms of
$F_m$ is inner again and the group of inner automorphisms
$\text{\rm Inn}(F_m)$ is well defined. For example, if
$F_m=L_m/L_m''$ is the free metabelian Lie algebra, then
$\text{ad}u$ is locally nilpotent for $u\in F_m$ if and only if
$u$ belongs to the commutator ideal $F_m'$ and
\[
\exp(\text{ad}u)\exp(\text{ad}v)=\exp(\text{ad}(u+v)),\quad u,v\in F_m'.
\]
It is easy to construct an example of a graded associative algebra $R$
and two elements $u,v\in R$ such that $u^k=v^n=0$ for some positive integers
$k$ and $n$ (which guarantees that $e^u,e^v\in R$) and the equation $e^ue^v=e^w$
has a solution $w$ in $\widehat{R}$ which does not belong to $R$,
see Example \ref{counterexample to product of exponentials}.
This gives also an example of a Lie algebra $H$ which is graded as an algebra, with well defined inner automorphisms
$\exp(\text{ad}u)$ and $\exp(\text{ad}v)$, $u,v\in H$, and such that the solution $w$
of the equation $\exp(\text{ad}u)\exp(\text{ad}v)=\exp(\text{ad}w)$ belongs to $\widehat{H}$
and not to $H$, see Example \ref{counterexample to product of inner autos}.
But we do not know any example of a relatively free algebra $F_m(G)$ with the property
that the product of inner automorphisms is not an inner automorphism of the algebra.
(By the Baker-Campbell-Hausdorff formula the product of inner automorphisms is inner in the completion
$\widehat{F_m}$ of $F_m=F_m(G)$.)

In the latter case (2) of nilpotent relatively free algebras $F_m(G)$, it is more convenient to work
in the completion $\widehat{F_m}$ of $F_m=F_m(G)$ which we prefer to do.
Then the results on $\text{\rm Inn}(F_m/F_m^{c+1})$
can be obtained immediately from
the corresponding results on $\text{\rm Inn}(\widehat{F_m})$
taking into account only the first
$c$ summands in the formal power series describing inner automorphisms.

Kofinas and Papistas \cite{KP} found a description of the automorphism group
of relatively free nilpotent Lie algebras $F_m$ over $\mathbb Q$ in terms of
the Baker-Campbell-Hausdorff formula. This allowed them to construct
generating sets for $\text{Aut}(F_m)$ for several varieties of nilpotent Lie algebras.

In the completion $\widehat{F_2}$ of a concrete relatively free Lie algebra $F_2=F_2(G)$
the Baker-Campbell-Hausdorff series may have a simpler form than in $\widehat{A}$.
For example, Gerritzen \cite{G} and later Kurlin \cite{K1, K2}
found an expression of the series in the completion of the free metabelian Lie algebra
$L_2/L_2''$. In our recent paper \cite{DF2}, see also \cite{DF1}, we have used their results to describe
the inner and outer automorphisms of the free metabelian nilpotent Lie algebras
$(L_m/L_m'')/(L_m/L_m'')^{c+1}$ for any $c$ and $m$.

Going back to the classics, Baker \cite{B} evaluated the Baker-Campbell-Hausdorff series
on several finite dimensional Lie algebras given in their adjoint representations,
including the three-dimensional simple Lie algebra.
In the present paper we study inner automorphisms of the completion of relatively free algebras
of the variety of algebras generated by the three-dimensional simple Lie algebra $G_3$.
If the field $K$ is algebraically closed, then $G_3$ is
isomorphic to the Lie algebra $sl_2(K)$ of traceless $2\times 2$ matrices.
This implies that $I(G_3)=I(sl_2(K))$ for any field $K$ of characteristic 0, see the comments in the end of Section
\ref{second section}.

In the theory of algebras with polynomial identities there is a standard generic construction which realizes
$F_m(G)$, $\text{\rm dim}(G)<\infty$. Fixing a basis of $G$, the algebra $F_m(G)$ is isomorphic to a subalgebra
of the algebra with elements which are obtained by replacing the coordinates of the elements of $G$ with
polynomials in $m\cdot\text{\rm dim}(G)$ variables. Our first result translates the result of Baker \cite{B}
in terms of this generic construction. We present a
multiplication rule for the inner automorphisms of the completion $\widehat{F_m}$
of $F_m=F_m(G_3)$ with respect to the formal power series topology.

The most important generic realization of
the relatively free algebra $F_m(G_3)=F_m(sl_2(K))$ is as the Lie algebra
generated by $m$ generic traceless $2\times 2$ matrices. This allows to apply
the approach of Baker \cite{B} and to obtain a formula
for the composition of inner automorphisms of $F_m(sl_2(K))$.
Although the structure of $F_m(sl_2(K))$ is known for all $m\geq 2$,
we consider the case $m=2$ only. The first reason for this restriction is that the structure of the algebra $F_m(sl_2(K))$
in the case $m>2$ is more complicated than for $m=2$. The second reason is that
the Lie algebra $F_2(sl_2(K))$ is naturally embedded into an associative algebra which is a free module
over the polynomial algebra in three variables and the commutator ideal of $F_2(sl_2(K))$
is a free submodule which allows to use methods of linear algebra.

We work in the completion $\widehat{W}$ of
the associative algebra $W$ generated by two generic
traceless $2\times 2$ matrices $x=(x_{ij})$ and $y=(y_{ij})$, where
 $x_{ij}$, $y_{ij}$, $(i,j)=(1,1),(1,2),(2,1)$, are algebraically independent
commuting variables, $x_{22}=-x_{11}$, $y_{22}=-y_{11}$. Let $L$ be
the Lie subalgebra of $W$ generated by $x$ and $y$. Then $L\cong F_2(sl_2(K))$.
We have obtained the complete description of the group of inner automorphisms
of the associative algebra $\widehat{W}$.
We give a lemma to recognize whether an element in $W$ belongs to $L$ and if the answer
is affirmative, an algorithm to write the element as a linear combination of commutators.
This leads to the description of the group of inner automorphisms
of the Lie algebra $\widehat{L}$: the multiplication rule for
$\text{\rm Inn}(\widehat{L})$ and the associated matrices of inner automorphisms of $\widehat{L}$.
Finally, factorizing the algebra $F_m(sl_2(K))$
modulo $F_m^{c+1}(sl_2(K))$ we derive the corresponding results
for the relatively free algebras in the variety $\text{var}(sl_2(K))\cap{\mathfrak N}_c$
of the nilpotent of class $\leq c$
Lie algebras in $\text{var}(sl_2(K))$.

It would be interesting to have a similar description for $\text{Inn}(\widehat{F_m})$
and $\text{Inn}(F_m/F_m^{c+1})$ also for $m>2$, where $F_m=F_m(sl_2(K))$,
but it seems that the answer will be quite technical.

Recently the second named author of the present paper \cite{F} has described the group
$\text{\rm Out}(\widehat{L})=\text{\rm Aut}(\widehat{L})/\text{\rm Inn}(\widehat{L})$
of outer automorphisms of $\widehat{L}$, where $\text{\rm Aut}(\widehat{L})$ is the group
of continuous automorphisms of $\widehat{L}$. This gives immediately the description of the group
$\text{\rm Out}(L/L^{c+1})$ of outer automorphisms of $L/L^{c+1}$.

The results of the present paper have been announced without proofs in \cite{DF1}.

\section{Preliminaries}\label{first section}

We fix a field $K$ of characteristic 0 and the associative algebra $W$ generated
by two generic traceless $2\times 2$ matrices
\[
x=\left(
\begin{array}{cc}
x_{11}&x_{12}\\
x_{21}&-x_{11}
\end{array}
\right),\quad
y=\left(
\begin{array}{cc}
y_{11}&y_{12}\\
y_{21}&-y_{11}
\end{array}
\right),
\]
where $x_{ij}$, $y_{ij}$, $(i,j)=(1,1),(1,2),(2,1)$, are algebraically independent
commuting variables. We assume that $W$ is a subalgebra of the $2\times 2$ matrix algebra
$M_2(K[x_{ij},y_{ij}])$ and identify the polynomial $f\in K[x_{ij},y_{ij}]$
with the scalar matrix with entries $f$ on the diagonal. In particular, for any matrix $z\in W$
we assume that the trace $\text{tr}(z)$ belongs to the centre of $M_2(K[x_{ij},y_{ij}])$.
Let  $L$ be the Lie subalgebra of $W$ generated by $x$ and $y$. This is the
smallest subspace of the vector space $W$ containing  $x$ and $y$
and closed with respect to the Lie multiplication
\[
[z_1,z_2]=z_1\text{\rm ad}z_2=z_1z_2-z_2z_1, \quad z_1,z_2\in L.
\]
Similarly we define the associative algebra $W_m$ generated
by $m\geq 2$ generic traceless $2\times 2$ matrices.
We assume that all commutators are left normed, i.e.,
\[
[z_1,\ldots,z_{n-1},z_n]=[[z_1,\ldots,z_{n-1}],z_n],\quad n=3,4,\ldots.
\]

The following results give the description of the algebras $W_m$, $W=W_2$ and $L$ and some equalities
in $W$.

\begin{theorem}\label{description of W}
Let $W_m$, $W$ and $L$ be as above. Then:

{\rm (i) (Razmyslov \cite{R})}
The algebra of generic traceless matrices $W_m$ is isomorphic to the factor-algebra
$K\langle x_1,\ldots,x_m\rangle/I(M_2(K),sl_2(K))$ of the free associative algebra $K\langle x_1,\ldots,x_m\rangle$,
where the ideal $I(M_2(K),sl_2(K))$ of the weak polynomial identities in $m$ variables for the pair
$(M_2(K),sl_2(K))$ consists of all polynomials from $K\langle x_1,\ldots,x_m\rangle$
which vanish on $sl_2(K)$ considered as a subset of $M_2(K)$. As a weak $T$-ideal
$I(M_2(K),sl_2(K))$ is generated by the
weak polynomial identity $[x_1^2,x_2]=0$. The Lie subalgebra of $W_m$ generated by the $m$ generic traceless matrices
is isomorphic to the relatively free algebra $F_m(sl_2(K))$
in the variety of Lie algebras generated by $sl_2(K)$.

{\rm (ii) (Drensky, Koshlukov \cite{DK}, see also the comments in \cite{D2}
and Koshlukov \cite{Ko1, Ko2} for the case of positive characteristic)}
The algebra $W_m$ has the presentation
\[
W_m\cong K\langle x_1,\ldots,x_m\mid [x_i^2,x_j]=s_4(x_{i_1},x_{i_2},x_{i_3},x_{i_4})=0\rangle,
\]
where $i,j,i_k=1,\ldots,m$, $i\not= j$, $i_1<i_2<i_3<i_4$, and
\[
s_4(x_1,x_2,x_3,x_4)=\sum_{\sigma\in S_4}\text{\rm sign}(\sigma)x_{\sigma(1)}x_{\sigma(2)}x_{\sigma(3)}x_{\sigma(4)}
\]
is the standard polynomial of degree $4$. In particular,
\[
W\cong K\langle x_1,x_2\mid [x_1^2,x_2]=[x_2^2,x_1]=0\rangle.
\]

{\rm (iii) (see e.g. Le Bruyn \cite{LB})}
The centre of $W$ is generated by
\[
t=\text{\rm tr}(x^2), \quad u=\text{\rm tr}(y^2), \quad v=\text{\rm tr}(xy).
\]
The elements $t,u,v$ are algebraically independent
and $W$ is a free $K[t,u,v]$-module with free generators $1,x,y,[x,y]$.

{\rm (iv) (see e.g. Drensky and Gupta \cite{DG})} For $k\geq 1$ the following equalities hold in $W$:
\[
x^2=\frac{t}{2};\quad y^2=\frac{u}{2};\quad xy+yx=v;\quad [x,y]^2=v^2-tu;
\]
\[
y\text{\rm ad}^{2k}x=2^kt^{k-1}(-vx+ty);\quad y\text{\rm ad}^{2k+1}x=2^kt^{k}[y,x];
\]
\[
x\text{\rm ad}^{2k}y=2^ku^{k-1}(ux-vy);\quad x\text{\rm ad}^{2k+1}y=2^ku^{k-1}[x,y].
\]
\end{theorem}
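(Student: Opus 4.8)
The statement collects four results, three of which (parts (i)--(iii)) are established in the literature while part (iv) consists of identities in $W$ that reduce to the Cayley--Hamilton theorem together with an induction on $k$. The plan is therefore to indicate the references for the deep parts and to record the elementary computations behind (iv).

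For (i) I would quote Razmyslov \cite{R}: a polynomial $f(x_1,\ldots,x_m)$ lies in $I(M_2(K),sl_2(K))$, i.e. vanishes on every substitution of traceless matrices, if and only if it lies in the weak $T$-ideal generated by $[x_1^2,x_2]$. Here $[x_1^2,x_2]$ is a weak identity because $z^2$ is a scalar matrix for every traceless $z$ (the Cayley--Hamilton identity $z^2-\mathrm{tr}(z)z+\det(z)=0$ with $\mathrm{tr}(z)=0$), and $s_4$ is a weak identity because $\dim sl_2(K)=3$ and $s_4$ is multilinear and alternating, hence vanishes on any four elements of $sl_2(K)$. To identify the Lie subalgebra generated by the generic matrices with $F_m(sl_2(K))$ one notes that a Lie element of $K\langle x_1,\ldots,x_m\rangle$ is a consequence of the weak identities of the pair precisely when it is a Lie identity of $sl_2(K)$, so this subalgebra is $L_m/I(sl_2(K))=F_m(sl_2(K))$; for the refined presentation (ii) of $W_m$ I would simply cite \cite{DK} and, in positive characteristic, \cite{Ko1,Ko2}.

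For (iii) I would invoke the classical invariant theory of a pair of $2\times2$ matrices, as in Le Bruyn \cite{LB}: the centre of $W$ is the algebra of $GL_2$-invariants, which in this case is the polynomial algebra on the three traces $t=\mathrm{tr}(x^2)$, $u=\mathrm{tr}(y^2)$, $v=\mathrm{tr}(xy)$; the reduction rules of part (iv) show that $1,x,y,[x,y]$ span $W$ over $K[t,u,v]$, and a specialization argument shows that these four elements form a free basis. If one wanted a self-contained account the main obstacle would be reproving Razmyslov's theorem and the freeness statement in (iii); since both are standard, I would keep the exposition at the level of assembling the references.

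For (iv) the first line is Cayley--Hamilton for traceless $2\times2$ matrices, $z^2=-\det(z)=\frac{1}{2}\mathrm{tr}(z^2)$, applied to $z=x$, $z=y$ and $z=x+y$; the last substitution gives $xy+yx=\mathrm{tr}(xy)=v$, and expanding $(xy-yx)^2$ with $x^2=t/2$, $y^2=u/2$ and $xy+yx=v$ yields $[x,y]^2=v^2-tu$. For the adjoint-action identities I would compute the base cases $y\,\mathrm{ad}\,x=[y,x]$ and $y\,\mathrm{ad}^2x=2(ty-vx)$ directly from the first line, and then run a single induction: since $x\,\mathrm{ad}^2x=0$, one has $(-vx+ty)\,\mathrm{ad}^2x=2t(-vx+ty)$, so applying $\mathrm{ad}^2x$ multiplies $2^kt^{k-1}(-vx+ty)$ by $2t$ and raises the exponent by two, while one further application of $\mathrm{ad}\,x$ sends $2^kt^{k-1}(-vx+ty)$ to $2^kt^k[y,x]$ because $(-vx+ty)\,\mathrm{ad}\,x=t[y,x]$. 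The corresponding formulas for $x\,\mathrm{ad}^ny$ follow from the symmetry interchanging $x$ with $y$ and $t$ with $u$.
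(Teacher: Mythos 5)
Your proposal matches the paper's treatment: the paper states this theorem without proof, simply citing Razmyslov \cite{R}, Drensky--Koshlukov \cite{DK}, Le Bruyn \cite{LB} and Drensky--Gupta \cite{DG} for parts (i)--(iv), and your assembly of the same references together with the Cayley--Hamilton and induction verification of (iv) is correct. Note only that your (correct) symmetry argument yields $x\,\text{\rm ad}^{2k+1}y=2^ku^{k}[x,y]$, so the exponent $u^{k-1}$ in the stated formula is evidently a misprint, as the case $k=1$ already shows: $x\,\text{\rm ad}^{3}y=2u[x,y]$.
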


Theorem \ref{description of W} (iii) and (iv) gives immediately that
$L$ is embedded into the free $K[t,u,v]$-module with free generators $x,y,[x,y]$.
The next lemma gives the precise description of the Lie elements in $W$.
It also provides an algorithm how to express in Lie form the elements of $L$ given
as elements of the free $K[t,u,v]$-module with basis $x,y,[x,y]$.

\begin{lemma}\label{Lie elements in W}
{\rm (i)} The commutator ideal $L'$ of $L\cong F_2(sl_2(K))$ is a free $K[t,u,v]$-module of rank $3$, with free generators
\[
xv-yt,\quad xu-yv,\quad [x,y].
\]

{\rm (ii)} The elements of
\[
L'=(xv-yt)K[t,u,v]\oplus (xu-yv)K[t,u,v]\oplus [x,y]K[t,u,v]
\]
can be expressed in Lie form using the identities
\[
2^{a+b+c+1}(xv-yt)t^au^bv^c=[x,y,y](\text{\rm ad}y)^{2b-1}(\text{\rm ad}x)^{2a+1}(\text{\rm ad}y\text{\rm ad}x)^c,\quad b>0,
\]
\[
2^{a+c+1}(xv-yt)t^av^c=[x,y,x](\text{\rm ad}x)^{2a}(\text{\rm ad}y\text{\rm ad}x)^c,
\]
\[
2^{a+b+c+1}(xu-yv)t^au^bv^c=[x,y,x](\text{\rm ad}x)^{2a-1}(\text{\rm ad}y)^{2b+1}(\text{\rm ad}x\text{\rm ad}y)^c,\quad a>0,
\]
\[
2^{b+c+1}(xu-yv)u^bv^c=[x,y,y](\text{\rm ad}y)^{2b}(\text{\rm ad}x\text{\rm ad}y)^c,
\]
\[
2^{a+b+c}[x,y]t^au^bv^c=[x,y](\text{\rm ad}x)^{2a}(\text{\rm ad}y)^{2b}(\text{\rm ad}x\text{\rm ad}y)^c.
\]
\end{lemma}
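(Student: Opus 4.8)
The plan is to prove the five identities of (ii) first, by direct computation in $W$ starting from Theorem \ref{description of W}(iv), and then to deduce (i) from them together with a closure argument. First I would record a short list of basic reductions that drive everything. From Theorem \ref{description of W}(iv) (or, equivalently, a two-line calculation using $x^2=t/2$, $y^2=u/2$, $xy+yx=v$) one obtains
\[
[x,y,x]=2(xv-yt),\qquad [x,y,y]=2(xu-yv),
\]
and then, since $t,u,v$ are central in $W$ and $[x,x]=[y,y]=0$,
\[
(xv-yt)\text{\rm ad}x=t[x,y],\quad (xv-yt)\text{\rm ad}y=v[x,y],\quad (xu-yv)\text{\rm ad}x=v[x,y],\quad (xu-yv)\text{\rm ad}y=u[x,y].
\]
So $\text{\rm ad}x$ and $\text{\rm ad}y$ merely cycle among a scalar multiple of $[x,y]$ and scalar multiples of $xv-yt$ and $xu-yv$, acquiring a factor $2t$, $2u$ or $2v$ at each step, and an induction on $a+b+c$ yields each of the five displayed identities. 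One should check that, as $(a,b,c)$ runs over $\mathbb{Z}_{\geq 0}^3$, the five right-hand sides exhaust all monomial multiples $t^au^bv^c$ of the three elements $xv-yt$, $xu-yv$, $[x,y]$: identities $1$ and $2$ cover the $xv-yt$ family (according as $b>0$ or $b=0$), identities $3$ and $4$ the $xu-yv$ family (according as $a>0$ or $a=0$), and identity $5$ the $[x,y]$ family. In particular every element of
\[
M:=(xv-yt)K[t,u,v]+(xu-yv)K[t,u,v]+[x,y]K[t,u,v]
\]
is a $K$-linear combination of left normed commutators in $x,y$, so $M\subseteq L'$; this is also exactly the content of (ii).

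For the reverse inclusion $L'\subseteq M$ I would argue that $M$ is a $K[t,u,v]$-submodule of $W$ containing $[x,y]$ and, by the basic reductions and the centrality of $t,u,v$, closed under $\text{\rm ad}x$ and $\text{\rm ad}y$ — for instance $([x,y]g)\text{\rm ad}x=2(xv-yt)g\in M$ and $((xv-yt)g)\text{\rm ad}y=v[x,y]g\in M$ for $g\in K[t,u,v]$. Hence $M$ is invariant under the associative subalgebra of $\text{\rm End}(W)$ generated by $\text{\rm ad}x$ and $\text{\rm ad}y$, and therefore under $\text{\rm ad}L$ (which is the Lie subalgebra it generates), so $M$ contains the ideal of $L$ generated by $[x,y]$; since $L$ modulo that ideal is abelian, the ideal is precisely $L'$, and we conclude $L'=M$.

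It then remains to see that $xv-yt$, $xu-yv$, $[x,y]$ form a free $K[t,u,v]$-basis of $M$. By Theorem \ref{description of W}(iii), $W$ is a free $K[t,u,v]$-module with basis $1,x,y,[x,y]$, so a relation $f_1(xv-yt)+f_2(xu-yv)+f_3[x,y]=0$ with $f_i\in K[t,u,v]$ amounts to $f_3=0$ and $f_1v+f_2u=f_1t+f_2v=0$. From $f_1v=-f_2u$ and $\gcd(u,v)=1$ in the UFD $K[t,u,v]$ we get $f_1=uh$, $f_2=-vh$ for some $h$, and then $f_1t+f_2v=0$ gives $h(ut-v^2)=0$; since $ut-v^2\neq 0$ (indeed $[x,y]^2=v^2-tu$ by Theorem \ref{description of W}(iv)), $h=0$ and $f_1=f_2=0$. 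So the sum defining $M$ is direct, $L'=M$ is a free $K[t,u,v]$-module of rank $3$ with the stated generators — proving (i) — while (ii) is the list of identities from the first paragraph. The one step needing genuine care is the inductive verification of the five identities: respecting the prescribed order of the $\text{\rm ad}x$- and $\text{\rm ad}y$-powers and confirming that the identities jointly hit every monomial $t^au^bv^c$; together with the small observation that $L'$ equals the ideal generated by $[x,y]$, the rest is essentially forced by Theorem \ref{description of W}.
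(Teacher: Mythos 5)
Your proposal is correct and follows essentially the same route as the paper: derive the bracket reduction identities from Theorem \ref{description of W}(iv), obtain the five formulas of (ii) by induction on $a+b+c$, identify $L'$ with the module generated by $xv-yt$, $xu-yv$, $[x,y]$ via closure under $\text{\rm ad}x$, $\text{\rm ad}y$, and prove freeness by expanding in the basis $1,x,y,[x,y]$ and using $v^2-tu\neq 0$. Your only deviations are cosmetic: you phrase the inclusion $L'\subseteq M$ through the ideal generated by $[x,y]$ rather than the paper's induction on the degree of left normed commutators, and you settle the linear system by a gcd argument in the UFD $K[t,u,v]$ instead of the paper's determinant argument — both hinge on the same nonvanishing of $v^2-tu$.
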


\begin{proof}
(i) We make induction on the degree of the commutators using that every element in a Lie algebra
is a linear combination of left normed commutators. The following equalities
\[
[x,y,x]=-[y,x,x]=2(xv-yt),\quad [x,y,y]=-[y,x,y]=2(xu-yv),
\]
\[
[x,y,x,x]=2[x,y]t,\quad [x,y,x,y]=[x,y,y,x]=2[x,y]v,\quad [x,y,y,y]=2[x,y]u
\]
\[
[xv-yt,x,x]=2(xv-yt)t,\quad [xv-yt,x,y]=2(xu-yv)t,
\]
\[
[xv-yt,y,x]=2(xv-yt)v,\quad [xv-yt,y,y]=2(xu-yv)v,
\]
\[
[xu-yv,x,x]=2(xv-yt)v,\quad [xu-yv,x,y]=2(xu-yv)v,
\]
\[
[xu-yv,y,x]=2(xv-yt)u,\quad [xu-yv,y,y]=2(xu-yv)u
\]
show that $L'$ coincides with the set of elements of $W$ in the form
\[
(xv-yt)f+(xu-yv)g+[x,y]h,\quad f,g,h\in K[t,u,v].
\]
This means that $L'$ is the $K[t,u,v]$-module generated by
$xv-yt,xu-yv,[x,y]$.
If such an element is equal to 0 then we have
\[
x(vf+ug)+y(-tf-vg)+[x,y]h=0
\]
in the free $K[t,u,v]$-module $xK[t,u,v]+yK[t,u,v]+[x,y]K[t,u,v]$. Hence
\[
vf+ug=0,\quad tf+vg=0,\quad h=0.
\]
Since the determinant $v^2-tu$ of the system of the first two equations
(with unknowns $f,g$) is different from 0 we have $f=g=0$.

(ii) The proof follows from by easy inductions using the equations in the proof of (i).
\end{proof}

The relatively free Lie algebra $F_m(sl_2(K))$, $m\geq 2$, has a nice description
in terms of representation theory of the general linear group $GL_m(K)$, see
\cite[Exercise 12.6.10, p. 245]{D1}. But for $m>2$ the center of the algebra $W_m$
has a lot of defining relations, see \cite{D2} and there is no analogue of Lemma \ref{Lie elements in W}.
This makes the explicit computations in $F_m(sl_2(K))$ more complicated.

Let $R$ be a (not necessarily associative) graded $K$-algebra,
\[
R=\bigoplus_{n\geq 0}R_{(n)}=R_{(0)}\oplus R_{(1)}\oplus R_{(2)}\oplus\cdots,
\]
where $R_{(n)}$ is the homogeneous component of degree $n$ in $R$,
and $R_{(0)}=0$ or $R_{(0)}=K$.
We consider the {\it formal power series topology} on $R$ induced by the filtration
\[
\omega^0(R)\supseteq \omega^1(R)\supseteq \omega^2(R)\supseteq\cdots,\quad
\omega^n(R)=\bigoplus_{k\geq n}R_{(k)},\quad n=0,1,2,\ldots,
\]
where $\omega(R)=R$ if $R_{(0)}=0$, and $\omega(R)$ is the augmentation ideal of $R$ when $R_{(0)}=K$.
This is the topology in which the sets
\[
r+\omega^n(R),\quad r\in R,\quad n\geq 0,
\]
form a basis for the open sets.
We shall denote by $\widehat{R}$ the completion of $R$ with respect to the formal power series topology
and shall identify it
with the Cartesian sum $\widehat \bigoplus_{n\geq 0}R_{(n)}$.
The elements $f\in \widehat{R}$ are formal power series
\[
f=f_0+f_1+f_2+\cdots,\quad f_n\in R_{(n)},\quad n=0,1,2,\ldots,
\]
A sequence
\[
f^{(k)}=f_{k0}+f_{k1}+f_{k2}+\cdots,\quad  k=1,2,\ldots,
\]
where $f_{kn}\in R_{(n)}$,
converges to $f=f_0+f_1+f_2+\cdots$, where $f_n\in R_{(n)}$, if for every $n_0$ there exists a $k_0$ such that
$f_{kn}=f_n$ for all $n< n_0$ and all $k\geq k_0$, i.e., for all sufficiently large $k$ the first $n_0$ terms
of the formal power series $f^{(k)}$ are the same as the first $n_0$ terms of $f$.

Let $F_m=F_m(G)$ be a relatively free algebra freely generated by $x_1,\ldots,x_m$. Then $F_m$ is graded
and the $n$th homogeneous component is spanned by all commutators $[x_{i_1},\ldots,x_{i_n}]$ of length $n$.
Hence the elements of $\widehat{F_m}$ are formal series of commutators. Since $[F_m^n,u]=F_m^n\text{ad}u\subset F_m^{n+1}$
for any $u\in F_m$, we derive that the inner automorphisms $\exp(\text{ad}u)$ of $\widehat{F_m}$ are continuous automorphisms.

Let $W_{(n)}$ be the subspace of $W$ spanned by all monomials
of total degree $n$ in $x,y$.
The elements $f\in \widehat{W}$ are formal power series
\[
f=f_0+f_1+f_2+\cdots,\quad f_n\in W_{(n)},\quad n=0,1,2,\ldots,
\]
and $\widehat{W}$ is a free $K[[t,u,v]]$-module with free generators $1,x,y,[x,y]$,
where $K[[t,u,v]]$ is the algebra of formal power series in the variables $t,u,v$.
Since $\widehat{L}$ is embedded canonically into
$\widehat{W}$, Lemma \ref{Lie elements in W} gives that
$(\widehat{L})'$ is a free $K[[t,u,v]]$-module with free generators $xv-yt,xu-yv,[x,y]$ and
\[
\widehat{L}=\{\alpha x+\beta y+a(xv-yt)+b(xu-yv)+c[x,y]\mid \alpha,\beta\in K, a,b,c\in K[[t,u,v]]\}.
\]

The Baker-Campbell-Hausdorff formula gives the solution $z$ of the equation
\[
e^z=e^xe^y
\]
for non-commuting $x$ and $y$, see e.g. \cite{Bo} and \cite{Se}.
If $x,y$ are the generators of the free associative algebra $A=K\langle x,y\rangle$, then
\[
z=x+y+\frac{[x,y]}{2}-\frac{[x,y,x]}{12}+\frac{[x,y,y]}{12}-\frac{[x,y,x,y]}{24}+\cdots
\]
is a formal power series in the completion $\widehat{A}$. The homogeneous components of $z$ are Lie elements
and $z\in \widehat{L_2}$ where $L_2$ is canonically embedded into $A$.

The composition of two inner automorphisms in $\text{\rm Inn}(\widehat{W})$ is also an inner automorphism
which can be obtained by the Baker-Campbell-Hausdorff formula.
Hence, studying the inner automorphisms of  $\widehat{L}$,
it is convenient to work in  $\widehat{W}$ and to study
the group of its inner automorphisms.

If $\delta$ is an endomorphism of the free $K[[t,u,v]]$-submodule of $\widehat{W}$ with basis $\{x,y,[x,y]\}$,
then we denote by $\text{\rm M}(\delta)$ the {\it associated} matrix of $\delta$ with respect to this basis. If
\[
\delta(x)=\sigma_{11}x+\sigma_{21}y+\sigma_{31}[x,y],
\]
\[
\delta(y)=\sigma_{12}x+\sigma_{22}y+\sigma_{32}[x,y],
\]
\[
\delta([x,y])=\sigma_{13}x+\sigma_{23}y+\sigma_{33}[x,y],
\]
$\sigma_{ij}\in K[[t,u,v]]$, then
\[
\text{\rm M}(\delta)=\left(
\begin{array}{ccc}
\sigma_{11}&\sigma_{12}&\sigma_{13}\\
\sigma_{21}&\sigma_{22}&\sigma_{23}\\
\sigma_{31}&\sigma_{32}&\sigma_{33}
\end{array}
\right).
\]
Clearly $M(\delta)$ behaves as a matrix of a usual linear operator. In particular,
\[
\text{\rm M}(\delta_1\delta_2)=\text{\rm M}(\delta_1)\text{\rm M}(\delta_2).
\]
Since the derivation $\text{\rm ad}X$, $X\in \widehat{W}$, acts trivially on the centre of $\widehat{W}$,
it is an endomorphism of $\widehat{W}$ as a $K[[t,u,v]]$-module.
Its restriction on the submodule generated by $x,y,[x,y]$ satisfies the above conditions.
Hence the matrix $\text{\rm M}(\text{\rm ad}X)$ is well defined, and similarly for the matrix
$\text{\rm M}(\exp(\text{\rm ad}X))$.

\section{The work of Baker and its consequences}\label{second section}

In the sequel we shall use calculations of Baker \cite{B}.
Let $G_3$ be the three-dimensional complex simple Lie algebra with the basis $\{p_1,p_2,p_3\}$
and multiplication
\[
[p_1,p_2]=p_1, \quad [p_1,p_3]=2p_2, \quad [p_2,p_3]=p_3,
\]
and let $X=x_1p_1+x_2p_2+x_3p_3\in G_3$, $x_1,x_2,x_3\in\mathbb C$.
We denote respectively by $P(X)$ and $Q(\exp(\text{\rm ad}X))$ the matrices of the linear operators
$\text{\rm ad}X$ and $\exp(\text{\rm ad}X)$ with respect to the basis
$\{p_1,p_2,p_3\}$. Clearly,
\[
P(X)=\left(
\begin{array}{cccc}
x_2&-x_1&0\\
2x_3&0&-2x_1\\
0&x_3&-x_2
\end{array}
\right).
\]

\begin{theorem}\label{Formula of Baker}{\rm(Baker \cite{B})}
{\rm (i)} In the above notation, the matrices $P(X)$ and $Q(\exp(\text{\rm ad}X))$
satisfy
\[
P^3(X)=g(X)P(X), \quad g(X)=x_2^2-4x_1x_3,
\]
\begin{align}
Q(\exp(\text{\rm ad}X))&=I_3+P(X)\left(1+\frac{g(X)}{3!}+
\frac{g^2(X)}{5!}+\frac{g^3(X)}{7!}+\cdots\right)+\nonumber\\
&\quad+P^2(X)\left(\frac{1}{2}+\frac{g(X)}{4!}+\frac{g^2(X)}{6!}+
\cdots\right)\nonumber\\
&=I_3+A(X)P(X)+B(X)P^2(X) \nonumber,
\end{align}
where $I_3$ is the $3\times 3$ identity matrix and
\[
A(X)=\frac{\sinh (\sqrt{g(X)})}{\sqrt{g(X)}}, \quad B(X)=\frac{\cosh (\sqrt{g(X)})-1}{g(X)}.
\]

{\rm (ii)} Let $X,Y\in G_3$ and let $Z=z_1p_1+z_2p_2+z_3p_3\in G_3$, $z_1,z_2,z_3\in\mathbb C$,
be such that
\[
\exp(\text{\rm ad}Z)=\exp(\text{\rm ad}X)\exp(\text{\rm ad}Y).
\]
If $Q(\exp(\text{\rm ad}Z))=(\sigma_{ij})$, then
\[
z_1=\frac{M_1}{2A},\quad z_2=\frac{M_2}{2A},\quad z_3=\frac{M_3}{2A},
\]
where
\[
M_1=-\sigma_{12}-\frac{1}{2}\sigma_{23},\quad M_2=\sigma_{11}-\sigma_{33},
\quad M_3=\frac{1}{2}\sigma_{21}+\sigma_{32},
\]
\[
A=\frac{\sinh (\sqrt{g(Z)})}{\sqrt{g(Z)}},\quad B=\frac{\cosh (\sqrt{g(Z)})-1}{g(Z)},
\]
\[
g(Z)=z_2^2-4z_1z_3=\left(\log \frac {1-M+\sqrt{1-2M}}{M}\right)^2,
\]
\[
M=\frac {2\sigma_{11}+2\sigma_{33}-4}{M_2^2-2M_1M_3}
=\frac {2\sigma_{13}}{M_1^2}
=\frac {2\sigma_{31}}{M_3^2}
=\frac {\sigma_{21}-2\sigma_{32}}{M_2M_3}
\]
\[
=\frac {\sigma_{23}-2\sigma_{12}}{M_1M_2}
=\frac {1-2\sigma_{22}}{M_1M_3}
=\frac {B}{A^2}.
\]
\end{theorem}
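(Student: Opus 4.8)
The plan is to establish (i) by a direct computation with the $3\times 3$ matrix $P(X)$ together with the Cayley--Hamilton theorem, and to derive (ii) by writing out $Q(\exp(\text{\rm ad}Z))$ by means of (i) and then inverting the resulting relation. For (i) I would first note that $\text{\rm tr}(P(X))=0$, that $\det(P(X))=0$ (because $[X,X]=0$, so $X$ lies in the kernel of $\text{\rm ad}X$), and that $\text{\rm tr}(P^2(X))=2g(X)$; the last can be checked by squaring $P(X)$, or seen from the fact that the eigenvalues of $\text{\rm ad}X$ on the three-dimensional simple Lie algebra are $0$, $\sqrt{g(X)}$ and $-\sqrt{g(X)}$. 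Hence the characteristic polynomial of $P(X)$ is $\lambda^3-g(X)\lambda$, and Cayley--Hamilton gives $P^3(X)=g(X)P(X)$, so that $P^{2k+1}(X)=g^k(X)P(X)$ and $P^{2k+2}(X)=g^k(X)P^2(X)$ for all $k\geq 0$. Substituting these relations into the everywhere convergent series $Q(\exp(\text{\rm ad}X))=\sum_{n\geq 0}P^n(X)/n!$ and separating the odd and even powers of $P(X)$ of degree at least $1$, one identifies the coefficient of $P(X)$ with the Taylor series of $\sinh(\sqrt{g(X)})/\sqrt{g(X)}$ and the coefficient of $P^2(X)$ with that of $(\cosh(\sqrt{g(X)})-1)/g(X)$. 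Both of these are even power series in $\sqrt{g(X)}$, so the ambiguity of the square root is harmless, and the case $g(X)=0$ is recovered as the limit.

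For (ii) I would apply (i) to $Z$: writing $A=A(Z)$, $B=B(Z)$ and $P=P(Z)$, we obtain $Q:=Q(\exp(\text{\rm ad}Z))=I_3+AP+BP^2$. Squaring $P(Z)$ entrywise expresses every $\sigma_{ij}$ explicitly as an affine function of $A$ and $B$ whose coefficients are monomials in $z_1,z_2,z_3$. The three combinations $M_1=-\sigma_{12}-\tfrac{1}{2}\sigma_{23}$, $M_2=\sigma_{11}-\sigma_{33}$, $M_3=\tfrac{1}{2}\sigma_{21}+\sigma_{32}$ are arranged precisely so that the contribution of $BP^2$ cancels in each of them, leaving $M_i=2Az_i$; hence $z_i=M_i/(2A)$ once $A$ is known. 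A further computation shows that each of the displayed quotients, such as $(2\sigma_{11}+2\sigma_{33}-4)/(M_2^2-2M_1M_3)$, $2\sigma_{13}/M_1^2$, $2\sigma_{31}/M_3^2$ and the others, reduces after these substitutions to one and the same scalar $M=B/A^2$, which is thus computable directly from the entries of $Q$.

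It then remains to recover $g(Z)$, and with it $A=A(Z)$, from $M$. Setting $s=\sqrt{g(Z)}$ we have $A=\sinh(s)/s$ and $B=(\cosh s-1)/s^2$, so that
\[
M=\frac{B}{A^2}=\frac{\cosh s-1}{\sinh^2 s}=\frac{1}{\cosh s+1},
\]
that is, $\cosh s=(1-M)/M$. Writing $w=e^s$ and solving $w+w^{-1}=2(1-M)/M$ gives $w=(1-M+\sqrt{1-2M})/M$, whence $g(Z)=s^2=\bigl(\log((1-M+\sqrt{1-2M})/M)\bigr)^2$; as in (i), the resulting expression is independent of the choice of branches because it is an even power series. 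Once $g(Z)$ is available, $A=A(Z)$ is determined, and $z_1,z_2,z_3$ are read off from $z_i=M_i/(2A)$.

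The main obstacle is the bookkeeping in (ii): one has to carry out the entrywise squaring of $P(Z)$, check that the combinations $M_i$ really annihilate the quadratic $B$-terms, and --- the least transparent point --- verify that the several different quotients all collapse to $B/A^2$, for which one uses the hyperbolic identity $A^2=B(2+g(Z)B)$ (equivalently $\cosh^2-\sinh^2=1$) to pass between the various forms. Minor additional points are the treatment of the degenerate locus $g(Z)=0$ (where $\sqrt{1-2M}=0$), of configurations in which some $M_i$ or $M_2^2-2M_1M_3$ vanishes, and of the requirement $A(Z)\neq 0$ needed in order to divide by $2A$; all of these are handled either by continuity or, in the formal power series setting of $\widehat{W}$, by the invertibility of $A$ as a power series with constant term $1$. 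Note finally that the existence of $Z$ with $\exp(\text{\rm ad}Z)=\exp(\text{\rm ad}X)\exp(\text{\rm ad}Y)$ is part of the hypothesis, so it need not be proved here.
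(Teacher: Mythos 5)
The paper never proves this theorem --- it is imported verbatim from Baker \cite{B} --- so there is no in-paper argument to compare with; your reconstruction is the natural one and is essentially sound. For (i), the characteristic-polynomial computation ($\text{\rm tr}(P)=0$, $\det(P)=0$, $\text{\rm tr}(P^2)=2g$), Cayley--Hamilton, and the splitting of $\sum_{n\geq 0}P^n/n!$ into odd and even powers of $P$ is exactly what is needed, and your remark that $A$ and $B$ are even series in $\sqrt{g}$ correctly disposes of the branch ambiguity and of the locus $g=0$. For (ii), writing $Q=I_3+AP(Z)+BP^2(Z)$, checking that the $B$-contributions cancel in $M_1,M_2,M_3$ (so $M_i=2Az_i$), and inverting $M=B/A^2=1/(\cosh\sqrt{g(Z)}+1)$ via the quadratic for $e^{\sqrt{g(Z)}}$ does reproduce $\cosh\sqrt{g(Z)}=(1-M)/M$ and $g(Z)=\bigl(\log((1-M+\sqrt{1-2M})/M)\bigr)^2$; your handling of the degenerate loci and of the invertibility of $A$ is also fine, as is the observation that the existence of $Z$ is hypothesis.

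One concrete warning about the step you left as ``a further computation'': one of the displayed quotients, as printed, does \emph{not} collapse to $B/A^2$. Squaring $P(Z)$ entrywise gives $\sigma_{22}=1-4Bz_1z_3$, while $M_1M_3=4A^2z_1z_3$, so
\[
\frac{1-2\sigma_{22}}{M_1M_3}=\frac{8Bz_1z_3-1}{4A^2z_1z_3}\neq\frac{B}{A^2}
\]
in general (e.g. $z_1=z_3=1$, $z_2=0$); the expression that does reduce to $B/A^2$ is $(1-\sigma_{22})/(M_1M_3)$, so the ``$1-2\sigma_{22}$'' in the statement appears to be a misprint. This does not damage your argument: the remaining five quotients --- $(2\sigma_{11}+2\sigma_{33}-4)/(M_2^2-2M_1M_3)$, $2\sigma_{13}/M_1^2$, $2\sigma_{31}/M_3^2$, $(\sigma_{21}-2\sigma_{32})/(M_2M_3)$, $(\sigma_{23}-2\sigma_{12})/(M_1M_2)$ --- do all equal $B/A^2$, and any one of them determines $M$, hence $g(Z)$, $A$, and $z_i=M_i/(2A)$ exactly as you describe. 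But since your sketch asserts that \emph{all} the displayed quotients reduce to $B/A^2$, a literal execution would claim a false identity; the seventh quotient should be corrected (or flagged) rather than verified.
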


Now we consider the algebra $G_3$ from the consideration of Baker over an arbitrary field
$K$ of characteristic 0. The following generic construction is well known. Let $m\geq 2$ and let
\[
K[x_{ij}]=K[x_{ij}\mid i=1,2,3,\quad j=1,\ldots,m]
\]
be the polynomial algebra in $3m$ variables.
We consider the tensor product $K[x_{ij}]\otimes_KG_3$ which is a Lie $K$-algebra.
It is a free $K[x_{ij}]$-module with basis $\{p_1,p_2,p_3\}$. We shall omit the symbol $\otimes$ for the tensor product
in the elements of $K[x_{ij}]\otimes_KG_3$. We fix the elements
\[
X_j=x_{1j}p_1+x_{2j}p_2+x_{3j}p_3,\quad j=1,\ldots,m.
\]
The Lie subalgebra generated by $X_1,\ldots,X_m$ in $K[x_{ij}]\otimes_KG_3$
is isomorphic to the relatively free algebra $F_m(G_3)=F_m(\text{\rm var}G_3)$
and we shall identify both algebras. Clearly, the completion of $F_m(G_3)$
with respect to the formal power series topology is canonically embedded into
$K[[x_{ij}]]\otimes_KG_3$. For an arbitrary
\[
V=v_1p_1+v_2p_2+v_3p_3,\quad v_i\in K[[x_{ij}]],
\]
the operators $\text{\rm ad}V$ and $\exp(\text{\rm ad}V)$ of
$K[[x_{ij}]]\otimes_KG_3$ act as endomorphisms of the free $K[[x_{ij}]]$-module
$K[[x_{ij}]]\otimes_KG_3$.
We denote respectively by $P(V)$ and $Q(\exp(\text{\rm ad}V))$ the matrices of these endomorphisms
with respect to the basis
$\{p_1,p_2,p_3\}$.

Our first result is the following corollary which translates the result of Baker given
in Theorem \ref{Formula of Baker} in terms of the above generic construction.

\begin{corollary}\label{Formula of Baker generic version}
If $X,Y,Z\in K[[x_{ij}]]\otimes_KG_3$,
\[
X=x_1p_1+x_2p_2+x_3p_3,\quad Y=y_1p_1+y_2p_2+y_3p_3,\quad
Z=z_1p_1+z_2p_2+z_3p_3,
\]
$x_i,y_i,z_i\in K[[x_{ij}]]$, are such that
\[
\exp(\text{\rm ad}Z)=\exp(\text{\rm ad}X)\exp(\text{\rm ad}Y),
\]
then the related matrices $P(V)$ and $Q(\exp(\text{\rm ad}V))$,
$V=X,Y,Z$, satisfy the relations of Theorem \ref{Formula of Baker}.
\end{corollary}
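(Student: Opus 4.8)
The plan is to show that the identities of Theorem \ref{Formula of Baker}, originally established by Baker for the complex algebra $G_3$ with scalar coefficients $x_i,y_i,z_i\in\mathbb C$, are in fact polynomial (or formal power series) identities in the entries of the matrices $P(X)$, $P(Y)$, $P(Z)$, and therefore remain valid when those entries are specialized to elements of $K[[x_{ij}]]$. First I would record the purely formal nature of the relation $P^3(X)=g(X)P(X)$: this is a matrix identity in the three indeterminates $x_1,x_2,x_3$, so it holds over $\mathbb Z[x_1,x_2,x_3]$ and hence over any commutative ring, in particular after substituting $x_i\in K[[x_{ij}]]$. Consequently the series defining $A(X)$ and $B(X)$, and the closed form $Q(\exp(\text{\rm ad}X))=I_3+A(X)P(X)+B(X)P^2(X)$, make sense and are valid in $M_3(K[[x_{ij}]])$: indeed $\exp(\text{\rm ad}X)=\sum_{n\ge 0}P(X)^n/n!$ converges in the formal power series topology because $X$ lies in the augmentation ideal, and using $P^3=gP$ to reduce all higher powers produces exactly the two displayed series in $g(X)$, which converge since $g(X)$ has zero constant term.

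The second, subtler point is part (ii), where one must solve for $Z$. Here I would argue by a transfer principle. Over $\mathbb C$, Baker's formulas express $z_1,z_2,z_3$ and $g(Z)$ as explicit functions of the entries $\sigma_{ij}$ of $Q(\exp(\text{\rm ad}X))Q(\exp(\text{\rm ad}Y))$; equivalently, the $\sigma_{ij}$ satisfy the algebraic relations displayed in the statement (the chain of equalities for $M$), and $z_i=M_i/2A$. These relations, once cleared of denominators, become polynomial identities among the entries of $P(X)$ and $P(Y)$ that hold for all complex values of $x_i,y_i$ — hence they are identities in $\mathbb Z[x_i,y_i]$ and persist under any specialization. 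The only care needed concerns the transcendental functions $\sinh$, $\cosh$, $\log$: I would treat $A=A(Z)$, $B=B(Z)$ and $g(Z)$ as formal power series in $g(Z)$, and check that the series manipulations Baker performs — inverting $A(Z)$, extracting $g(Z)$ via $\log$ — are legitimate in $K[[x_{ij}]]$ precisely because $A(Z)$ has constant term $1$ (so is invertible) and $g(Z)$ has zero constant term (so $\log\bigl((1-M+\sqrt{1-2M})/M\bigr)$, expanded appropriately, is a well-defined power series with the right vanishing).

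I would close by noting that the hypothesis $\exp(\text{\rm ad}Z)=\exp(\text{\rm ad}X)\exp(\text{\rm ad}Y)$ determines $Z$ uniquely in the augmentation ideal of $K[[x_{ij}]]\otimes_K G_3$ (the Baker–Campbell–Hausdorff series), so it suffices to verify that the $Z$ produced by Baker's formulas is a solution; this is exactly the content of part (ii) read as a formal identity, already validated by the transfer argument. The main obstacle is bookkeeping rather than conceptual: one must confirm that each division and each application of a transcendental function in Baker's derivation stays inside the formal power series ring, i.e. that no constant term vanishes where it must be inverted and none fails to vanish where convergence is needed. Since all of $x_i,y_i,z_i,g(X),g(Y),g(Z)$ lie in the augmentation ideal of $K[[x_{ij}]]$ while $A(X),A(Y),A$ have constant term $1$, these checks all go through, and the corollary follows.
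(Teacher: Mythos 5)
Your proposal is correct and follows essentially the same route as the paper: it treats Baker's relations as formal identities that persist after specializing the coefficients into $K[[x_{ij}]]$, and reduces the whole matter to checking that the hyperbolic series in $g$ and the logarithm are well defined there (augmentation-ideal coefficients, $A$ invertible with constant term $1$, the argument of the logarithm having constant term $1$ — the paper phrases this via the constant term $1/2$ of $M=B/A^2$), together with the uniqueness of $Z$.
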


\begin{proof}
Obviously, $Z\in K[[x_{ij}]]\otimes_KG_3$ is uniquely determined from the condition
$\exp(\text{\rm ad}Z)=\exp(\text{\rm ad}X)\exp(\text{\rm ad}Y)$. The computations of Baker in
Theorem \ref{Formula of Baker} are performed for power series which converge evaluated in $\mathbb C$.
They hold also over $K[[x_{ij}]]$ if the corresponding expressions have sense there. Working in
$K[[w]]$, we have that
\[
\frac{\sinh(\sqrt{w})}{\sqrt{w}}=1+\frac{w}{3!}+\frac{w^2}{5!}+\frac{w^3}{7!}+\cdots,
\quad
\frac{\cosh(\sqrt{w})-1}{w}=\frac{1}{2!}+\frac{w}{4!}+\frac{w^2}{6!}+\cdots.
\]
These expressions are well defined in $K[[x_{ij}]]$ for $w=v_2^2-4v_1v_3$
when $v_1,v_2,v_3\in K[[x_{ij}]]$ are formal power series without constant term.
Since the constant term of $M=B/A^2$ in Theorem \ref{Formula of Baker} is equal
to $1/2$, we verify directly that the logarithm in the expression of $g(Z)$ has no constant term,
and this implies that all formal expressions have sense in $K[[x_{ij}]]$.
\end{proof}

The three-dimensional Lie algebra $G_3$ is simple. Extending the field $K$ to its algebraic closure $\overline{K}$,
we obtain an algebra isomorphic to $sl_2(\overline{K})$. It is well known that
if two $K$-algebras $G_1$ and $G_2$ become isomorphic over an extension of the infinite filed $K$,
then $G_1$ and $G_2$ have the same polynomial identities.
Hence the relatively free algebras $F_m(G_3)$ and $F_m(sl_2(K))$ are isomorphic.
Corollary \ref{Formula of Baker generic version} provides the multiplication formula for arbitrary inner automorphisms
of the completion $\widehat{F_m}$ of $F_m=F_m(G_3)$ in terms of the generic elements
$X_1,\ldots,X_m$. But it does not give the expression of $\exp(\text{\rm ad}Z))$ as a formal power series of Lie commutators.
In the next section we shall mimic the computations of Baker for the Lie algebra generated by two generic traceless
$2\times 2$ matrices and shall show how to find the Lie expression of the Baker-Campbell-Hausdorff formula
modulo the polynomial identities of $sl_2(K)$.

\section{Inner automorphisms of the Lie algebra of two generic matrices}\label{third section}

In this section we find the explicit form of the
associated matrix of the inner automorphisms of $\widehat{W}$ and
a multiplication rule for $\text{\rm Inn}(\widehat W)$.
Then we
transfer the obtained results to the algebra
$W/\omega(W)^{c+1}$ and obtain the description of
$\text{\rm Inn}(W/\omega(W)^{c+1})$.
Applying Lemma \ref{Lie elements in W} we obtain immediately the corresponding results
for the Lie algebras $\widehat{L}$ and $L/L^{c+1}$.

Let $\text{\rm Inn}(\widehat W)$ denote the set of all
inner automorphisms of $\widehat W$ which are of the form
$\exp(\text{\rm ad}X)$, $X\in \widehat W$.

As we already discussed, since $\widehat W$ is a $K[[t,u,v]]$-module with the generators $1,x,y,[x,y]$ and
$\text{\rm ad}X$ acts trivially on $1$ it is sufficient to know the action of inner automorphisms on only $x,y,[x,y]$.

\begin{theorem}\label{Formula for inner}
Let $X=ax+by+c[x,y]$, $a,b,c\in K[[t,u,v]]$, be an element in $\widehat W$ and let
\[
\text{\rm M}(\text{\rm ad}X)=\left(
\begin{array}{cccc}
-2cv&-2cu&2(av+bu)\\
2ct&2cv&-2(at+bv)\\
b&-a&0
\end{array}
\right)
\]
be the associated matrix of $\text{\rm ad}X$. Then the associated matrix of $\exp(\text{\rm ad}X)$ is of the form
\[
\text{\rm M}(\exp(\text{\rm ad}X))=I_3+A(X)\text{\rm M}(\text{\rm ad}X)+B(X)\text{\rm M}^2(\text{\rm ad}X),
\]
where
\[
A(X)=\frac{\sinh (\sqrt{g(X)})}{\sqrt{g(X)}}, \quad B(X)=\frac{\cosh (\sqrt{g(X)})-1}{g(X)},
\]
\[
g(X)=2(a^2t+2abv+b^2u+2c^2(v^2-tu)).
\]
\end{theorem}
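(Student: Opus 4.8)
The plan is to transcribe Baker's computation in Theorem~\ref{Formula of Baker}(i) from $G_3$ over $\mathbb{C}$ to the rank-three free $K[[t,u,v]]$-module $M$ with basis $\{x,y,[x,y]\}$ inside $\widehat W$. A direct computation (rather than an appeal to Corollary~\ref{Formula of Baker generic version}) is cleaner here and is what actually produces the closed form of $g(X)$. The key structural fact needed is a cubic relation $P^{3}=g(X)P$ for $P:=\text{M}(\text{ad}X)$, after which the exponential sums up exactly as in Baker's formula.

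First I would compute $P$. Since $\text{ad}X$ is $K[[t,u,v]]$-linear (it kills the centre), it suffices to expand $x\,\text{ad}X$, $y\,\text{ad}X$, $[x,y]\,\text{ad}X$ in the basis $\{x,y,[x,y]\}$. Using the identities $[x,[x,y]]=-[x,y,x]=-2(xv-yt)$, $[y,[x,y]]=-[x,y,y]=-2(xu-yv)$, $[[x,y],x]=2(xv-yt)$ and $[[x,y],y]=2(xu-yv)$ from Lemma~\ref{Lie elements in W}, this is a short direct calculation giving exactly the displayed matrix. Next, to obtain $P^{3}=g(X)P$, I would invoke the Cayley--Hamilton theorem over the integral domain $K[[t,u,v]]$. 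From the explicit form of $P$ one reads off $\text{tr}\,P=-2cv+2cv+0=0$; and $\det P=0$ because $X=ax+by+c[x,y]$ lies in $M$ with $X\,\text{ad}X=[X,X]=0$, so the coordinate vector $(a,b,c)^{\mathrm T}$ spans a line in $\ker P$ (alternatively one simply expands the $3\times3$ determinant). Hence the characteristic polynomial of $P$ is $\lambda^{3}+e_{2}\lambda$ with $e_{2}=\tfrac12\big((\text{tr}\,P)^{2}-\text{tr}(P^{2})\big)=-\tfrac12\text{tr}(P^{2})$, so Cayley--Hamilton gives $P^{3}=-e_{2}P=\tfrac12\text{tr}(P^{2})\,P$. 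Finally one computes $\text{tr}(P^{2})$ from the six entry products $P_{ii}^{2}$ and $2P_{ij}P_{ji}$ ($i<j$) and finds $\text{tr}(P^{2})=2g(X)$ with $g(X)=2(a^{2}t+2abv+b^{2}u+2c^{2}(v^{2}-tu))$, whence $P^{3}=g(X)P$. I expect this identification of $g(X)$ in closed form to be the one point demanding genuine care; everything else is bookkeeping.

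With the cubic relation in hand, note that every monomial of $g(X)$ is divisible by $t$, $u$ or $v$, so $g(X)\in\omega(K[[t,u,v]])$; consequently $A(X)=\sum_{k\ge0}g(X)^{k}/(2k+1)!$ and $B(X)=\sum_{k\ge0}g(X)^{k}/(2k+2)!$ converge in $K[[t,u,v]]$ and coincide with $\sinh(\sqrt{g(X)})/\sqrt{g(X)}$ and $(\cosh(\sqrt{g(X)})-1)/g(X)$. Using $P^{2k+1}=g(X)^{k}P$ for $k\ge0$ and $P^{2k}=g(X)^{k-1}P^{2}$ for $k\ge1$, the series $\sum_{n\ge0}P^{n}/n!$ separates into its odd and even parts as $I_{3}+A(X)P+B(X)P^{2}$, and it converges entrywise because $g(X)^{k}\to0$. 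It remains to connect this to $\text{M}(\exp(\text{ad}X))$: since $X\in\omega(\widehat W)$, the operator $\text{ad}X$ raises filtration degree, so $\exp(\text{ad}X)=\sum_{n}(\text{ad}X)^{n}/n!$ is a well-defined continuous endomorphism of $\widehat W$, it preserves $M$ because $\text{ad}X$ does, and $\delta\mapsto\text{M}(\delta)$ is additive, multiplicative and compatible with these limits; applying it term by term gives $\text{M}(\exp(\text{ad}X))=\sum_{n}P^{n}/n!=I_{3}+A(X)P+B(X)P^{2}$, which is the assertion.
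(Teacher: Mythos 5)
Your proposal is correct and follows essentially the same route as the paper: compute $\text{\rm M}(\text{\rm ad}X)$ from the identities $[x,[x,y]]=2(-xv+yt)$, $[y,[x,y]]=2(-xu+yv)$, establish the cubic relation $\text{\rm M}^3(\text{\rm ad}X)=g(X)\text{\rm M}(\text{\rm ad}X)$, and then sum the exponential series exactly as in Baker's formula. The only (harmless) variations are that you obtain the cubic relation via Cayley--Hamilton, using $\text{\rm tr}(P)=\det(P)=0$ and $\text{\rm tr}(P^2)=2g(X)$, where the paper computes $\text{\rm M}^3(\text{\rm ad}X)$ directly, and that you spell out the formal convergence of $A(X)$, $B(X)$ and of the series $\sum P^n/n!$, which the paper leaves implicit.
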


\begin{proof}
From Theorem \ref{description of W} (iv) we know that
\[
[x,[x,y]]=2(-xv+yt);\quad [y,[x,y]]=2(-xu+yv).
\]
By easy calculations we obtain that
\[
x\text{\rm ad}X=-2cvx+2cty+b[x,y],
\]
\[
y\text{\rm ad}X=-2cux+2cvy-a[x,y],
\]
\[
[x,y]\text{\rm ad}X=2(av+bu)x-2(at+bv)y,
\]
\[
\text{\rm M}(\text{\rm ad}X)=\left(
\begin{array}{cccc}
-2cv&-2cu&2(av+bu)\\
2ct&2cv&-2(at+bv)\\
b&-a&0
\end{array}
\right),
\]
\[
\text{\rm M}^2(\text{\rm ad}X)=\left(
\begin{array}{cccc}
4c^2w+2b(av+bu)&-2a(av+bu)&-4acw\\
-2b(at+bv)&4c^2w+2a(at+bv)&-4bcw\\
-2c(at+bv)&-2c(av+bu)&2a(at+bv)+2b(av+bu)
\end{array}
\right),
\]
$w=v^2-tu$.
Calculating $\text{\rm M}^3(\text{\rm ad}X)$ we obtain that
\[
\text{\rm M}^3(\text{\rm ad}X)=g(X)\text{\rm M}(\text{\rm ad}X),\quad
g(X)=2(a^2t+2abv+b^2u+2c^2(v^2-tu)).
\]
Following the steps in Theorem \ref{Formula of Baker} we obtain that
\[
\text{\rm M}(\exp(\text{\rm ad}X))=I_3+A(X)\text{\rm M}(\text{\rm ad}X)+B(X)\text{\rm M}^2(\text{\rm ad}X),
\]
\[
A(X)=\frac{\sinh (\sqrt{g(X)})}{\sqrt{g(X)}}, \quad B(X)=\frac{\cosh (\sqrt{g(X)})-1}{g(X)}.
\]
\end{proof}

\begin{theorem}\label{algorithm Inn}
If $Q=\text{\rm M}(\exp(\text{\rm ad}X))$
is the associated matrix of an inner automorphism for an element $X$ in $\widehat W$,
then the associated matrix $\text{\rm M}(\text{\rm ad}X)$ of the derivation $\text{\rm ad}X$ is
\[
\text{\rm M}(\text{\rm ad}X)=\frac{A(X)}{B(X)}(Q-I_3)-\frac{1}{2A(X)}(Q^2-I_3),
\]
where
\[
\cosh(\sqrt{g(X)})=\frac{1}{2}\left(\text{\rm tr}(Q)-1\right),
\]
\[
A(X)=\frac{\sinh(\sqrt{g(X)})}{\sqrt{g(X)}}, \quad B(X)=\frac{\cosh(\sqrt{g(X)})-1}{g(X)}.
\]
\end{theorem}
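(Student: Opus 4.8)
The plan is to treat the statement as elementary linear algebra over the power series ring $K[[t,u,v]]$, leaning on the identity $\text{\rm M}^3(\text{\rm ad}X)=g(X)\text{\rm M}(\text{\rm ad}X)$ already proved in Theorem \ref{Formula for inner}. Write $P=\text{\rm M}(\text{\rm ad}X)$ and abbreviate $g=g(X)$, $A=A(X)$, $B=B(X)$, so that $Q=I_3+AP+BP^2$ with $P^3=gP$ (hence $P^4=gP^2$). I would first record the two scalar relations linking $A$, $B$ and $g$, both consequences of $\cosh^2-\sinh^2=1$: namely $Bg=\cosh\sqrt{g}-1$ and $A^2=2B+B^2g$, the latter because $A^2g=\sinh^2\sqrt{g}=\cosh^2\sqrt{g}-1=(\cosh\sqrt{g}-1)(\cosh\sqrt{g}+1)=Bg(Bg+2)$. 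I would also note that since $g$ has no constant term, $A$ and $B$ are power series in $g$ with constant terms $1$ and $\tfrac12$ respectively, hence units in $K[[t,u,v]]$, so all the divisions below are legitimate.

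Next I would take traces. The explicit matrices in Theorem \ref{Formula for inner} give $\text{\rm tr}(P)=0$ and $\text{\rm tr}(P^2)=2g$; alternatively this is automatic, since a $3\times3$ matrix with $P^3=gP$ has eigenvalues $0,\pm\sqrt{g}$. Therefore $\text{\rm tr}(Q)=3+A\,\text{\rm tr}(P)+B\,\text{\rm tr}(P^2)=3+2Bg=1+2\cosh\sqrt{g}$, which is exactly the asserted identity $\cosh\sqrt{g(X)}=\tfrac12(\text{\rm tr}(Q)-1)$. Since $w\mapsto\cosh\sqrt{w}-1$ is a power series in $w$ with no constant term and nonzero linear coefficient, it is invertible as a formal substitution, so $g(X)$ — and with it $A(X)$, $B(X)$ — is recovered from $Q$, and the right-hand side of the claimed formula is genuinely a function of $Q$. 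Squaring $Q$ and reducing $P^3,P^4$ gives
\[
Q^2=I_3+(2A+2ABg)P+(2B+A^2+B^2g)P^2=I_3+2A\cosh\sqrt{g}\,P+2A^2P^2,
\]
where the second equality uses $Bg=\cosh\sqrt{g}-1$ and $2B+A^2+B^2g=2A^2$ (from $A^2=2B+B^2g$).

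Finally, $Q-I_3=AP+BP^2$ and $Q^2-I_3=2A\cosh\sqrt{g}\,P+2A^2P^2$ are two $K[[t,u,v]]$-linear combinations of $P$ and $P^2$, and I would solve the resulting $2\times2$ system for $P$: looking for scalars $\lambda,\mu$ with $\lambda(Q-I_3)+\mu(Q^2-I_3)=P$ forces $\lambda B+2\mu A^2=0$ and $\lambda A+2\mu A\cosh\sqrt{g}=1$; eliminating $\lambda=-2\mu A^2/B$ and using $A^2/B=2+Bg=1+\cosh\sqrt{g}$ to obtain $\cosh\sqrt{g}-A^2/B=-1$ yields $\mu=-\tfrac1{2A}$ and $\lambda=\tfrac{A}{B}$, i.e. $\text{\rm M}(\text{\rm ad}X)=\tfrac{A(X)}{B(X)}(Q-I_3)-\tfrac1{2A(X)}(Q^2-I_3)$. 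The only genuine subtlety in this otherwise mechanical argument — what I would call the main obstacle — is spotting that the coefficients collapse under the elementary identities $A^2g=\cosh^2\sqrt{g}-1$ and $Bg=\cosh\sqrt{g}-1$; without invoking them nothing visibly simplifies. It is also worth remarking that the computation shows $\text{\rm ad}X$ is uniquely determined by $\exp(\text{\rm ad}X)$, so $\exp$ is injective on derivations of this form.
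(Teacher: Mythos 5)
Your proof is correct and follows essentially the same route as the paper: establish $\cosh\sqrt{g(X)}=\tfrac12(\operatorname{tr}(Q)-1)$, invoke the identity $A^2=B^2g+2B$, and solve the linear system in $\mathrm{M}(\operatorname{ad}X)$ and $\mathrm{M}^2(\operatorname{ad}X)$ coming from $Q-I_3$ and $Q^2-I_3$. The only (harmless) variation is that you obtain the trace identity from $\operatorname{tr}(P)=0$, $\operatorname{tr}(P^2)=2g$ directly, rather than from the paper's similarity of $Q$ to $\operatorname{diag}(1,e^{\sqrt{g}},e^{-\sqrt{g}})$, and you add the useful observation that $A$ and $B$ are units in $K[[t,u,v]]$ so the divisions are legitimate.
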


\begin{proof}
Since  $\text{\rm M}^3(\text{\rm ad}X)=g(X)\text{\rm M}(\text{\rm ad}X)$, the Cayley-Hamilton theorem gives that
\[
\text{\rm M}(\text{\rm ad}X)\sim
\left(
\begin{array}{cccc}
0&0&0\\
0&\sqrt{g(X)}&0\\
0&0&-\sqrt{g(X)}
\end{array}
\right),
\]
\[
Q=\text{\rm M}(\exp(\text{\rm ad}X))
\sim
\left(
\begin{array}{cccc}
1&0&0\\
0&\exp(\sqrt{g(X)})&0\\
0&0&\exp(-\sqrt{g(X)})
\end{array}
\right).
\]
Hence the trace of $Q$ is equal to $1+\exp(\sqrt{g(X)})+\exp(-\sqrt{g(X)})$ which gives the expression for
\[
\cosh(\sqrt{g(X)})=\frac{1}{2}\left(\exp(\sqrt{g(X)})+\exp(-\sqrt{g(X)})\right)
=\frac{1}{2}\left(\text{\rm tr}(Q)-1\right).
\]
So we have also
\[
A(X)=\frac{\sinh(\sqrt{g(X)})}{\sqrt{g(X)}}, \quad B(X)=\frac{\cosh(\sqrt{g(X)})-1}{g(X)}.
\]
We consider the expressions for $Q$ and $Q^2$
\[
Q=I_3+A(X)\text{\rm M}(\text{\rm ad}X)+B(X)\text{\rm M}^2(\text{\rm ad}X),
\]
\[
Q^2=I_3+2A(X)(1+B(X)g(X))\text{\rm M}(\exp(\text{\rm ad}X))
\]
\[
+(A^2(X)+B^2(X)g(X)+2B(X))\text{\rm M}^2(\exp(\text{\rm ad}X))
\]
as a linear system with unknowns $\text{\rm M}(\text{\rm ad}X)$ and $\text{\rm M}^2(\text{\rm ad}X)$.
Using the equality
\[
A^2(X)=B^2(X)g(X)+2B(X),
\]
the solution of the system gives
\begin{align}
\text{\rm M}(\text{\rm ad}X)&=\frac{-(A^2(X)+B^2(X)g(X)+2B(X))(Q-I_3)+B(X)(Q^2-I_3)}{-A(X)(A^2(X)+B^2(X)g(X)+2B(X))+2A(X)B(X)(1+B(X)g(X))}\nonumber\\
&=\frac{-2A^2(X)(Q-I_3)+B(X)(Q^2-I_3)}{-2A(X)B(X)}\nonumber.
\end{align}
\end{proof}

\begin{remark}
Theorem \ref{algorithm Inn} shows how to find $X\in \widehat{W}$ if we know the matrix
$Q=\text{\rm M}(\exp(\text{\rm ad}X))$. Since
$\cosh(\sqrt{g(X)})=(\text{\rm tr}(Q)-1)/2$ is of the form $1+h$ for some $h\in \omega(K[[t,u,v]])$, we use the formula
\[
\sqrt{g(X)}=\text{\rm arccosh}(1+h)=\log\left(1+h+\sqrt{h^2+2h}\right)
\]
which can be easily verified, and we obtain expressions for $g(X),A(X)$ and $B(X)$. Combined with
Theorem \ref{Formula for inner} we obtain also the multiplication rule for the group of inner
automorphisms of $\widehat W$: Given $\text{\rm ad}X$ and $\text{\rm ad}Y$, we calculate
consecutively $Q_1=\text{\rm M}(\exp(\text{\rm ad}X))$, $Q_2=\text{\rm M}(\exp(\text{\rm ad}Y))$,
their product $Q=\text{\rm M}(\exp(\text{\rm ad}Z))=Q_1Q_2$ and finally $Z$.
\end{remark}

Let us denote by $\omega$ the augmentation ideal of the polynomial algebra $K[t,u,v]$ consisting of the polynomials
without constant terms and let us denote its completion $\widehat{\omega}\subset K[[t,u,v]]$ with respect to the formal power formal series.
Since the elements $t=2x^2$, $u=2y^2$, $v=xy+yx$ are of even degree in $W$, the associated matrices of the automorphisms
of $\widehat{W}$ modulo $\widehat{\omega(W)}^{c+1}$, $c\geq 3$, contain the entries in the factor algebra $K[t,u,v]/\omega^{[(c+1)/2]}$.

As a consequence of our Theorems \ref{Formula for inner} and \ref{algorithm Inn}
for $\text{\rm Inn}(\widehat W)$ we
immediately obtain the description of the group of inner automorphisms of
$W/\omega(W)^{c+1}$. We shall give the results for the associated matrices only. The multiplication rule for
the group $\text{\rm Inn}(W/\omega(W)^{c+1})$ can be stated similarly.

\begin{corollary}
Let $X=ax+by+c[x,y]$, $a,b,c\in K[[t,u,v]]$, be an element in $\widehat{W}$ and let  $\text{\rm M}(\text{\rm ad}X)$ be the associated matrix
of $\text{\rm ad}X$. Then the associated matrix $\text{\rm M}(\exp(\text{\rm ad}X))$ of the inner automorphism
$\exp(\text{\rm ad}X)$ of $W/\omega(W)^{c+1}\cong \widehat{W}/\omega(\widehat{W})^{c+1}$ is of the form
\[
\text{\rm M}(\exp(\text{\rm ad}X))=I_3+A(X)\text{\rm M}(\text{\rm ad}X)+B(X)\text{\rm M}^2(\text{\rm ad}X)
\quad (\text{\rm mod }M_3(\omega^{[(c+1)/2]})),
\]
\[
A(X)=\frac{\sinh (\sqrt{g(X)})}{\sqrt{g(X)}}, \quad B(X)=\frac{\cosh (\sqrt{g(X)})-1}{g(X)},
\]
where
\[
\text{\rm M}(\text{\rm ad}X)=\left(
\begin{array}{cccc}
-2cv&-2cu&2(av+bu)\\
2ct&2cv&-2(at+bv)\\
b&-a&0
\end{array}
\right),
\]
\[
g(X)=2(a^2t+2abv+b^2u+2c^2(v^2-tu))
\]
and $M_3(\omega^{[(c+1)/2]})$ is the $3\times 3$ matrix algebra with entries from the $[(c+1)/2]$-th power of the
augmentation ideal of $K[t,u,v]$.
\end{corollary}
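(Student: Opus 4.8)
The statement is an immediate transfer of Theorems~\ref{Formula for inner} and~\ref{algorithm Inn} to the nilpotent quotient, so the proof is essentially an observation about how the completion $\widehat{W}$ interacts with the filtration by powers of the augmentation ideal. First I would recall the canonical isomorphism $W/\omega(W)^{c+1}\cong\widehat{W}/\omega(\widehat{W})^{c+1}$: since $W$ is graded with finite-dimensional homogeneous components, $\omega(\widehat W)^{c+1}=\widehat{\bigoplus}_{k\geq c+1}W_{(k)}$ and the natural map from $W$ is surjective with kernel exactly $\omega(W)^{c+1}$, so any formal power series in $\widehat W$ is congruent modulo $\omega(\widehat W)^{c+1}$ to an honest element of $W$. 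Thus every inner automorphism $\exp(\text{\rm ad}X)$ of $W/\omega(W)^{c+1}$ lifts to $\exp(\text{\rm ad}X)$ on $\widehat W$ (for a lift $X\in\widehat W$), and conversely the reduction of an inner automorphism of $\widehat W$ is inner on the quotient. In particular the ring of scalars $K[[t,u,v]]$ acting on $\widehat W$ reduces to $K[t,u,v]/\omega^{[(c+1)/2]}$, because $t,u,v$ have degree $2$ in $W$ and so $\omega^{[(c+1)/2]}$ is precisely the image of $K[[t,u,v]]\cap\omega(\widehat W)^{c+1}$.

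With this dictionary in place, I would simply invoke Theorem~\ref{Formula for inner}. That theorem gives, over $\widehat W$, the exact identity
\[
\text{\rm M}(\exp(\text{\rm ad}X))=I_3+A(X)\text{\rm M}(\text{\rm ad}X)+B(X)\text{\rm M}^2(\text{\rm ad}X),
\]
with $A(X),B(X)$ the indicated convergent power series in $g(X)\in\widehat{\omega}$. Reducing this matrix identity entrywise modulo $\omega^{[(c+1)/2]}$ — i.e. working in $M_3\bigl(K[t,u,v]/\omega^{[(c+1)/2]}\bigr)$ — yields exactly the asserted congruence, because matrix multiplication, the power series $A$ and $B$, and the formation of $g(X)$ are all compatible with the quotient map $K[[t,u,v]]\to K[t,u,v]/\omega^{[(c+1)/2]}$ of scalar rings. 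The expressions for $\text{\rm M}(\text{\rm ad}X)$ and $g(X)$ are the same formulas as in Theorem~\ref{Formula for inner}, now read in the quotient. This is all there is to the argument; no new computation is needed.

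The only point requiring a moment of care — and the one I would state explicitly — is the exponent $[(c+1)/2]$ rather than $c+1$ itself. This is exactly the parity phenomenon already flagged in the paragraph preceding the Corollary: the generators $t,u,v$ of the centre are homogeneous of degree $2$, so a product of $k$ of them sits in degree $2k$, and hence the entries of the associated matrices (which are scalars multiplying $x,y,[x,y]$) only see $K[t,u,v]$ truncated at $\omega^{[(c+1)/2]}$ when one truncates $W$ at $\omega(W)^{c+1}$. I do not anticipate a genuine obstacle here; the bookkeeping with the degree shift by $2$ is the one place a reader could stumble, so I would spell out that $\omega^{[(c+1)/2]}\subseteq\omega(W)^{c+1}$ in $W$ and that this is the sharp truncation level for the scalar ring, after which the result follows verbatim from Theorem~\ref{Formula for inner}.
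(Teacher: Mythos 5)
Your proposal is correct and is exactly the paper's route: the paper gives no separate proof, deriving the corollary ``immediately'' from Theorem~\ref{Formula for inner} by reducing the exact identity in $\widehat W$ modulo $\omega(\widehat W)^{c+1}$ and using the remark that $t,u,v$ have degree $2$, which forces the truncation level $[(c+1)/2]$ for the matrix entries. One small caveat: your inclusion $\omega^{[(c+1)/2]}\subseteq\omega(W)^{c+1}$ is literally false for even $c$ (e.g.\ $t^{c/2}$ has degree $c$); the correct statement is that $\omega^{[(c+1)/2]}\cdot\{x,y,[x,y]\}\subseteq\omega(W)^{c+1}$, since a scalar $\sigma$ kills the basis elements modulo $\omega(W)^{c+1}$ exactly when $\deg\sigma\geq c$, i.e.\ $\sigma\in\omega^{[(c+1)/2]}$ --- which is what makes this the sharp truncation for the entries.
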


We shall complete the paper with two examples which are in the spirit of our considerations.
They are based on the simplest relations which guarantee that the corresponding
exponentials are well defined. We believe that the existence of such examples is folklorely
known although we were not able to find any references.

\begin{example} \label{counterexample to product of exponentials}
Let $R$ be the two-generated associative algebra with presentation
\[
R=K\langle u,v\mid u^2=v^2=0\rangle.
\]
Then
\[
e^u=1+u,\quad e^v=1+v
\]
are elements of $R$ but the equation $e^ue^v=e^w$ has no solution $w$ in $R$.
\end{example}

\begin{proof}
Since $R$ is a monomial algebra it has a basis as a $K$-vector space consisting of all non-commutative monomials
which do not contain as subwords $u^2$ and $v^2$.
The completion $\widehat{R}$ of $R$ with respect to the formal power series topology consists of formal power series
of such monomials. Consider the $2\times 2$ matrices
\[
U=\left(\begin{matrix}
0&a\\
0&0\\
\end{matrix}\right),
\quad
V=\left(\begin{matrix}
0&0\\
b&0\\
\end{matrix}\right),
\]
where $a,b$ are algebraically independent commuting variables.
Since $U^2=V^2=0$, the $K$-subalgebra $R_1$ of $M_2(K[a,b])$ generated by $U$ and $V$
is a homomorphic image of the algebra $R$. Algebras like $R_1$ appear in the paper by Belov \cite{Be}.
Tracing his considerations, it follows that
the algebras $R$ and $R_1$ are isomorphic. See also \cite{DSW} for a similar matrix realization of algebras
generated by two quadratic elements and direct proof of the isomorphism of $R$ and $R_1$.
We shall work in the algebra $R_1$ and its completion $\widehat{R_1}$ and
show that the solution $W$ of the equation $e^Ue^V=e^W$ in $\widehat{R_1}$ does not belong to $R_1$.
Direct computations give that
\[
e^U=\left(\begin{matrix}
1&a\\
0&1\\
\end{matrix}\right),
\quad
e^V=\left(\begin{matrix}
1&0\\
b&1\\
\end{matrix}\right),\quad
e^Ue^V=\left(\begin{matrix}
1+ab&a\\
b&1\\
\end{matrix}\right)=I+T=e^W,
\]
\[
I=\left(\begin{matrix}
1&0\\
0&1\\
\end{matrix}\right),\quad
T=\left(\begin{matrix}
ab&a\\
b&0\\
\end{matrix}\right),
\quad
W=\log(I+T)=\sum_{n\geq 1}\frac{(-1)^{n-1}T^n}{n}.
\]
Let $c=ab$ and let
\[
\xi_{1,2}=\frac{c\pm\sqrt{c(c+4)}}{2}
\]
be the solutions of the quadratic equation $\xi^2=c(\xi+1)$.
Since
\[
T^2=ab(T+I)=c(T+I),
\]
by easy induction  and the Vi\`ete formulas
(or using linear recurrence relations arguments)
we obtain
\[
T^n=\frac{1}{\sqrt{c(c+4)}}\left((\xi_1^n-\xi_2^n)T+c(\xi_1^{n-1}-\xi_2^{n-1})I\right).
\]
Hence
\[
W=\frac{1}{\sqrt{c(c+4)}}\sum_{n\geq 1}\frac{(-1)^{n-1}}{n}\left((\xi_1^n-\xi_2^n)T
+c(\xi_1^{n-1}-\xi_2^{n-1})I\right)
\]
\[
=\frac{1}{\sqrt{c(c+4)}}\left((\log(1+\xi_1)-\log(1+\xi_2))T+c\left(\frac{\log(1+\xi_1)}{\xi_1}-\frac{\log(1+\xi_2)}{\xi_2}\right)I\right).
\]
The equation $(1+\xi_1)(1+\xi_2)=1$ implies $\log(1+\xi_1)=-\log(1+\xi_2)$,
\[
W=\frac{\log(1+\xi_1)}{\sqrt{c(c+4)}}\left(2T+c\left(\frac{1}{\xi_1}+\frac{1}{\xi_2}\right)I\right)
=\frac{\log(1+\xi_1)}{\sqrt{c(c+4)}}(2T-cI).
\]
If $W\in R_1$, we would have that $\displaystyle{\frac{\log(1+\xi_1)}{\sqrt{c(c+4)}}}$
is a polynomial in $K[a,b]$ and hence in $K[c]$. In this way
$\log(1+\xi_1)=f(c)\sqrt{c(c+4)}$ for some $f(c)\in K[c]$. Solving the equation $\xi_1^2=c(\xi_1+1)$ with respect to $c$ we obtain
\[
c=\frac{\xi_1^2}{1+\xi_1}=\sum_{n\geq 2}(-1)^n\xi_1^n,\quad \sqrt{c(c+4)}=\frac{\xi_1(2+\xi_1)}{1+\xi_1}.
\]
Since $c$ belongs to the augmentation ideal of the algebra $K[[\xi_1]]$ of formal power series in $\xi_1$
we derive that $K[[c]]\subset K[[\xi_1]]$. Then
\[
\frac{\log(1+\xi_1)}{\sqrt{c(c+4)}}=\frac{1+\xi_1}{\xi_1(2+\xi_1)}\log(1+\xi_1)=f\left(\frac{\xi_1^2}{1+\xi_1}\right),
\]
\[
\log(1+\xi_1)=\frac{\xi_1(2+\xi_1)}{1+\xi_1}f\left(\frac{\xi_1^2}{1+\xi_1}\right).
\]
This is a contradiction because $\log(1+\xi_1)$ is not a rational function in $\xi_1$.
\end{proof}

\begin{example} \label{counterexample to product of inner autos}
Let $L_2$ be the free Lie algebra freely generated by $u,v$  and let $H$ be its factor algebra modulo
the ideal generated by all commutators $[z,u,u],[z,v,v]$, $z\in L_2$.
Then $\exp(\text{\rm ad}u)$ and $\exp(\text{\rm ad}v)$ are well defined inner automorphisms of $H$ but the solution $w$
of the equation $\exp(\text{\rm ad}u)\exp(\text{\rm ad}v)=\exp(\text{\rm ad}w)$ belongs to $\widehat{H}$
and not to $H$.
\end{example}

\begin{proof}
Consider the algebra ${\mathcal M}(L_2)$ of the multiplications of
the Lie algebra $L_2$. It is the associative subalgebra of the algebra of linear operators
of the vector space $L_2$ generated by the operators $\text{ad}z$, $z\in L_2$.
Since the elements of $L_2$ are linear combinations of left normed commutators of $u$ and $v$,
the algebra $\mathcal M(L_2)$ is generated by $\text{ad}u$ and $\text{ad}v$. It is well known that $\mathcal M(L_2)$ is
isomorphic to the two-generated free associative algebra. From the definition of $H$ as the factor Lie algebra
\[
H=L_2/([z,u,u],[z,v,v]\mid z\in L_2)
\]
we derive that the algebra ${\mathcal M}(H)$ of the multiplications of $H$ satisfies the relations
\[
\text{ad}^2u=\text{ad}^2v=0.
\]
The freedom of ${\mathcal M}(L_2)$ implies that the above two equalities are the defining relations of
${\mathcal M}(H)$ and it has the presentation
\[
{\mathcal M}={\mathcal M}(H)=K\langle \text{ad}u,\text{ad}v\mid \text{ad}^2u=\text{ad}^2v=0\rangle.
\]
Let $\widehat{\mathcal M}$ be the completion of $\mathcal M$ with respect to the formal power series topology.
The exponentials
\[
\exp(\text{ad}u)=1+\text{ad}u,\quad \exp(\text{ad}v)=1+\text{ad}v
\]
are inner automorphisms of $H$. Now Example \ref{counterexample to product of exponentials} gives that the
solution $\text{ad}w$, $w\in \widehat{H}$, of the equation $\exp(\text{ad}u)\exp(\text{ad}v)=\exp(\text{ad}w)$
which is in $\widehat{\mathcal M}$ does not belong to $\mathcal M$, i.e., $w$ does not belong to $H$.
\end{proof}

\section*{Acknowledgements}

The second named author is grateful to the Institute of Mathematics and Informatics of
the Bulgarian Academy of Sciences for the creative atmosphere and the warm hospitality during his visit when
most of this project was carried out.

\end{document}